\documentclass[11pt,final]{article}
\usepackage{amsmath}
\usepackage{amssymb}
\usepackage{latexsym}
\usepackage{amsthm}
\usepackage{fullpage}
\usepackage{graphicx}
\usepackage[colorinlistoftodos]{todonotes}
\usepackage{marginnote}
\usepackage{hyperref}
 \numberwithin{equation}{section}
\usepackage[notcite, notref]{showkeys}
\usepackage{caption}
\usepackage{subcaption}
\usepackage[mathscr]{eucal}

\captionsetup[wrapfigure]{margin=1em}

\usepackage{mathrsfs}
\usepackage{wrapfig}
\newtheorem{theorem}{Theorem}[section]
\newtheorem{lemma}[theorem]{Lemma}

\theoremstyle{remark}
\newtheorem{remark}[theorem]{Remark}

\theoremstyle{definition}

\newcommand{\R}{{\mathbb R}}
\newcommand{\N}{{\mathbb N}}

\newcommand{\be}{\begin{eqnarray}}

\newcommand{\ee}{\end{eqnarray}}

\newcommand{\md}{{\rm d}}

\renewcommand{\O}{\Omega}

\newcommand{\F}{{%
F}}

\renewcommand{\det}{{\rm det}\,}

\newcommand{\vstress}{{S}}

\def\argmin{\mathop{\rm argmin}}
\def\sm{\mathop{\rm sym}}

\def\curl{\mathop{\rm curl}}
\def\SO{\mathrm{SO}}
\def\GL{\mathrm{GL}}
\def\Skew{\mathrm{Skew}}
\def\F{\mathcal F}

\def\argmin{\mathop{\rm argmin}}

\def\Id{\mathbf{Id}}

\def\eps{\varepsilon}

\def\XXint#1#2#3{{\setbox0=\hbox{$#1{#2#3}{\int}$}
     \vcenter{\hbox{$#2#3$}}\kern-.5\wd0}}

\newcommand{\EEE}{\color{black}}

\author{Patrick Dondl\footnote{Department for Applied Mathematics, University of Freiburg, Germany
 } \  \ Martin Jesenko\footnote{University of Ljubljana, Faculty of Civil and Geodetic Engineering, Jamova cesta 2, 1001 Ljubljana, Slovenia}\ \ \ Martin Kru\v{z}\'{i}k\footnote{Institute of Information Theory and Automation,
Czech Academy of Sciences, Pod vod\'{a}renskou
v\v{e}\v{z}\'{\i}~4, CZ-182~08~Praha~8, Czechia (corresponding
address) \& Faculty of Civil Engineering, Czech Technical
University, Th\'{a}kurova 7, CZ-166~ 29~Praha~6, Czechia
}\ \ \ Jan Valdman\footnote{Institute of Information Theory and Automation,
Czech Academy of Sciences, Pod vod\'{a}renskou
v\v{e}\v{z}\'{\i}~4, CZ-182~08~Praha~8, Czechia \&  Department of Mathematics, Faculty of Science,
University of South Bohemia,
37005~\v{C}esk\'{e}~Bud\v{e}jovice, Czechia }
}
\title{Linearization and Computation for Large-Strain Viscoelasticity}
\begin{document}
\maketitle
\begin{abstract}
Time-discrete numerical minimization schemes for simple viscoelastic materials in the large strain Kelvin-Voigt rheology are not well-posed due to non-quasiconvexity of the dissipation functional. A possible solution is to  resort into non-simple material models with higher-order gradients of deformations. This makes, however,  numerical computations much more involved. Here we propose another approach relying on  local minimizers of the simple-material model. Computational tests are provided showing a very good agreement between our model and the original one.

\end{abstract}
\medskip
\noindent
{\bf Key Words:} Kelvn-Voigt rheology; Viscoelasticity; Numerical scheme  \\
\medskip
\noindent
{\bf AMS Subject Classification:}
49J45, 35B05

\section{Introduction}
In-time-semidiscretized problems of nonlinear viscoelasticity in  Kelvin's-Voigt's rheology  lead to a sequence of minimization problems where a current solution depends on the previous one. More precisely, starting from an initial condition $y^0$  we must find for $k=1,\ldots T/\tau$
\begin{align}\label{eq:intro-minimization}
   y^k\in \argmin_{y} \int_\Omega W(\nabla y)\, dx + \frac1{2\tau} D^2(\nabla y,\nabla y^{k-1})\ ,
\end{align}
where $\tau>0$ is a time step, $T$ is a final time, $W$ is an elastic energy density, and $D$ is a  dissipation due to viscosity. However, physically acceptable dissipation functions  are not (Morrey) quasiconvex \cite{Antman}. As we are interested in the limit for $\tau\to 0$  the problem \eqref{eq:intro-minimization} does not necessarily have a minimizer.  This suggests microstructure formation due to rapidly oscillating deformation gradients, which, however, is not observed in reality. The aim of the present work is to explore  this discrepancy.

Due to dissipative effects we expect  $y$ to be found in  a small  neighborhood of $y^{k-1}$ in \eqref{eq:intro-minimization} that leads to local minimization.  For that reason,  we linearize the energy functional in \eqref{eq:intro-minimization} in the spirit of \cite{DNP}. Thus we obtain a quadratic energy contribution stemming from the dissipation and the stress from the elastic energy.  The resulting  functional is not coercive,   but in spite of this, we show that  minimizers exist.

We  perform numerical experiments comparing the original nonlinear scheme in \eqref{eq:intro-minimization} with our  linearized scheme for small time steps  that show a very good quantitative  agreement. Moreover, both schemes   satisfy an energy balance.

\section{Modeling of nonlinear viscoelasticity}

If we neglect inertial effects,  the deformation $y:[0,T]\times \Omega\to\R^d$ of a  nonlinear viscoelastic material in Kelvin's-Voigt's rheology obeys the following system of equations
\begin{subequations}\label{syst:viscoel}
\begin{gather}
\label{eq:viscoel}
-{\rm div}\Big(\partial_FW(\nabla y)  + \partial_{\dot F}R(\nabla y, \nabla\dot y)  \Big) =  f\text{ in $  [0,T] \times  \Omega$,} \\
\Big(\partial_FW(\nabla y)   + \partial_{\dot F}R(\nabla y,\nabla\dot y) \Big)n=g\text{ on }  [0,T] \times\Gamma_N, \\
y=y_D \text{ on } [0,T] \times\Gamma_D, \\
y(0,\cdot)=y^0 \text{ in $ \Omega$,}
\end{gather}
\end{subequations}
Here, $[0,T]$ is a process time interval,   $\Omega\subset\R^d$  ($d=2$ or $d=3$)  is a smooth bounded domain representing the reference configuration, $\Gamma_D\cup\Gamma_N$ is a disjoint partition of $\partial\Omega$  such that $\Gamma_D$ has a positive $d-1$-dimensional Hausdorff  measure. Further, $f:[0,T]\times\Omega\to\R^d$ is a density of body forces and  $g:[0,T]\times\Gamma_N\to\R^d$ is a density of surface forces.  We also impose  boundary datum $ y_D : [0,T]\times \Gamma_D \to \R^d$ and an initial condition $y^0: \Omega \to \R^d$.

Material properties are described by the stored energy density  $W:\R^{d\times d}\to  [0,\infty]$, where its gradient represents the first Piola-Kirchhoff stress tensor, and by a (pseudo)potential of dissipative forces $R:  \R^{d \times d} \times \R^{d \times d} \to [0,\infty)  $.

The  stress tensor
$\vstress(F,\dot F):= \partial_{\dot F} R(F,\dot F)$  has its origin in viscous dissipative mechanisms of the material.   Naturally, we require that $R(F,\dot F)\ge R(F,0)=0$.  The viscous stress tensor must comply with the time-continuous frame-indifference principle  meaning that for all $F, \dot{F}\in \R^{d\times d}$
\begin{align*}
\vstress(F,\dot F)=F\tilde\vstress(C,\dot C)
\end{align*}
 with some $\tilde\vstress : \R^{d \times d}_{\sm} \times \R^{d \times d}_{\sm} \to \R^{d \times d}_{\sm} $  where $ C = F^{\top} F $ and $ \dot{C} = \dot{F}^{\top} F + F^{\top} \dot{F} $.   This condition constraints
$R$ so that (see \cite{Antman, Antman:04,MOS})
\begin{align}\label{eq:frame indifference-R}
R(F,\dot F)=\tilde R(C,\dot C)\ ,
\end{align}
for some nonnegative function $\tilde R$ such that $\tilde R(C,0)=0$.  In other words,  $R$ must depend on the right Cauchy-Green strain tensor $C$ and its time derivative $\dot C$.  On the other hand, this property makes the analysis of \eqref{syst:viscoel} very difficult. To our best knowledge,  the existence of a solution has not been established up to now.  On the other hand, if the stored energy is enriched by the second gradient of the deformation, existence results are available in various settings, see  \cite{mfmk,KR} in three dimensions and \cite{BS,MOS} in one dimension.

Namely, a standard approach to obtain solutions for such evolution problems  is to introduce an implicit time discretization. For simplicity, we will  assume that $y_D$ is constant in time in the sequel.
Let $\tau>0$ be a time step such that $T/\tau\in \N$. Given  an initial condition $y^0_\tau$, for $k=1,\ldots, T/\tau$, we solve the following problem stemming from \eqref{syst:viscoel}.
\begin{alignat}{3}\label{eq:PDEvisco-formulation}
-{\rm div}\Big(\partial_FW(\nabla y^k_\tau)   + \partial_{\dot F}R(\nabla y_\tau^k, \nabla\frac{y^k_\tau-y^{k-1}_\tau}{\tau})  \Big) &=  f^k_\tau:=\tau^{-1}\int_{(k-1)\tau}^{k\tau}f(s,\cdot)\,\md s \quad \text{ in  $\Omega$,}
\nonumber
\\
\Big(\partial_FW(\nabla y^k_\tau)   + \partial_{\dot F}R(\nabla y_\tau^k, \nabla\frac{y^k_\tau-y^{k-1}_\tau}{\tau})\Big)n&= g^k_\tau:=\tau^{-1}\int_{(k-1)\tau}^{k\tau}g(s,\cdot)\,\md s  \quad \text{ on } \Gamma_N, \\
y^k_\tau&=y_D \quad \text{ on } \Gamma_D. \nonumber
\end{alignat}

\paragraph{Stored elastic energy and body forces:}  Assume that
 $W: \R^{d \times d} \to [0,\infty]$ is
\begin{align}\label{assumptions-W}
\begin{split}
(i)& \ \ W \text{ is smooth on matrices with positive determinants},\\
(ii)& \ \  W(QF) = W(F) \text{ for all } F \in \R^{d \times d}, Q \in \SO(d),\\
(iii)& \ \ W(F)\ge c(-1+|F|^p) \text{ for some $c>0$, $p>1$,  and  for all $F\in\R^{d\times d}$}
\end{split}
\end{align}
where $\SO(d) = \lbrace Q\in \R^{d \times d}: Q^\top Q = \Id, \, \det Q=1 \rbrace$ are proper rotations.

We introduce the nonlinear elastic energy $\phi^k_\tau: W^{1,p}(\Omega;\R^d) \to [0,\infty]$ by
\begin{align}\label{nonlinear energy}
\phi^k_\tau(y) = \int_\Omega W(\nabla y(x))\,\md x   - \int_\Omega f^k_\tau(x)\cdot y(x)\,\md x-\int_{\Gamma_N}g^k_\tau(x)\cdot y(x)\,\md S
\end{align}
for  $y: W^{1,p}(\Omega;\R^d) \to \R^d$.  \EEE

\paragraph{Dissipation potential and viscous stress:} Consider a time dependent deformation $y: [0,T] \times \Omega \to \R^d$. Viscosity is not only related to the strain $\nabla y(t,x)$, but also to the strain rate $ \nabla \dot y(t,x)$ and can be expressed in terms of a  dissipation potential $R(\nabla y, \nabla \dot y)$.  An admissible potential has to satisfy frame indifference in the sense (see \cite{Antman, MOS})
\begin{align}\label{R: frame indiff}
R(F,\dot{F}) = R(QF,Q(\dot{F} + AF))  \ \ \  \forall  Q \in \SO(d),\, A \in \Skew(d)
\end{align}
for all $F \in \GL_+(d)$ and $\dot{F} \in \R^{d \times d}$, where
$$\GL_+(d) = \lbrace F \in \R^{d \times d}: \det F>0 \rbrace, \quad \Skew(d) = \lbrace A  \in \R^{d \times d}: A=-A^\top \rbrace.$$

Following the discussion in \cite[Section 2.2]{MOS}, from the point of modeling  it is much more  convenient to postulate the existence of a (smooth) global distance $D: \GL_+(d) \times \GL_+(d) \to [0,\infty)$ satisfying $D(F,F) = 0$ for all $F \in \GL_+(d)$, from which an associated dissipation potential $R$ can be calculated by
\begin{align}\label{intro:R}
R(F,\dot{F}) := \lim_{\eps \to 0} \frac{1}{2\eps^2} D^2(F+\eps\dot{F},F) = \frac{1}{4} \partial^2_{F_1^2} D^2(F,F) [\dot{F},\dot{F}]
\end{align}
for $F \in \GL_+(d)$, $\dot{F} \in \R^{d \times d}$,  where $\partial^2_{F_1^2} D^2(F_1,F_2)$ denotes the Hessian of  $D^2$ in direction of $F_1$ at $(F_1,F_2)$, being a fourth order tensor.
\\
For the sake of simplicity, in the present work we use
\[ D(F_1,F_2)=|C_1-C_2|, \]
where $C_i=F_i^\top F_i$ for $i=1,2$.
In this case, a direct computation yields
\begin{equation} \label{eq:R-concrete}
    R(F,\dot{F}) = 2 | \sm ( F^{\top}  \dot{F} ) |^{2}
\end{equation}

For justification of this choice for $D$ and for further  examples of admissible dissipation distances, we refer the reader to \cite[Section 2.3]{MOS}. %

Now,  solutions to  \eqref{eq:PDEvisco-formulation} can \emph{formally} be written as  solutions of the following sequence of problems:
\begin{equation} \label{eq:visco_minimization_naive}
\begin{gathered}
\text{For a given  $y^{k-1}_{\tau}\in W^{1,p}(\O;\R^d) $ with $ y^{k- 1}_\tau=y_D $ on $ \Gamma_D$} \\
\text{ minimize } \quad
\phi^k_\tau(y)  +\frac{1}{2\tau}\int_\O D^2\Big(\nabla y,\nabla y^{k-1}_{\tau}\Big)\,\md x  \\
\text{ subject to } \quad y\in W^{1,p}(\O;\R^d), \ y=y_D \text{ on } \Gamma_D.
\end{gathered}
\end{equation}

Due to the non-quasiconvexity of $F\mapsto D(F,G)$ for a given $G$, see \cite{dacorogna}, it is immediately clear that one cannot expect existence of minimizers due to the formation of microstructure. We note, however, that such microstructure is usually not  observed in  viscoelastic  materials. Our conjecture is that the issue here lies with the fact that \eqref{eq:visco_minimization_naive} is a global minimization, whence a viscous evolution should only explore the energy and dissipation in a small neighborhood of the current state. We therefore propose to minimize instead  in each time step a problem that is linearized around $y^{k-1}_{\tau}$ in a suitable sense.

\section{Linear viscoelasticity}\label{sec:linvis}
To motivate our linearization approach for this implicit time-discretization problem, we first consider the case of (already) linearized viscoelasticity. Here, we simply recover the original evolution equations, which (neglecting any forces) are given by the linear Kelvin-Voigt viscoelasticity problem
\begin{alignat}{3}\label{eq:lin_evol}
 -{\rm div} (\mathbb{C}\nabla u+\mathbb{D}\nabla \dot u  ) &= 0 \qquad \text{in $[0,T]\times\Omega$}, \nonumber \\
u&=u_D \,\quad \text{on $\Gamma_D$}, \\
u(0,\cdot)&=u^0 \,\,\quad \text{in $\Omega$}\nonumber
\end{alignat}
Here $\mathbb{C}$ and $\mathbb{D}$ are positive definite fourth-order tensors of elasticity and viscosity  coefficients, respectively.
Note that in this linearized setting, we work with a displacement field $u$ instead of the deformation $y$. %

The variational formulation for the linear counterpart of \eqref{eq:PDEvisco-formulation} reads:
For a given  $u^{k-1}_{\tau}\in W^{1,2}(\O;\R^d) $ with $ u^{k-1}_\tau=u_D $ on $ \Gamma_D$, find the minimizer $u^{k}_{\tau}$ of
\begin{align}\label{min:linvis}
u \mapsto \frac{1}{2}\int_\O\mathbb{C}\nabla u : \nabla u \,\md x+\frac{1}{2\tau}\int_\O \mathbb{D}(\nabla u- \nabla u^{k-1}_{\tau}) : (\nabla u- \nabla u^{k-1}_{\tau})\,\md x.
\end{align}
under the constraints $ u\in W^{1,2}(\O;\R^d) $ and $ u=u_D $ on $ \Gamma_D$.
Since
the functional is quadratic and
convex in $ \nabla u $, the minimizer indeed exists and is unique. It is clear that this iterative minimization scheme, in the limit of small time steps $\tau$, leads to a solution of \eqref{eq:lin_evol}.

Our idea is to reformulate the minimization problem to \eqref{min:linvis} in terms of a minimizing increment in displacement. More precisely, we write
$ u = u_\tau^{k-1} + \tau v $
and minimize
\begin{equation}
v \mapsto \frac{1}{2}\int_\O
\left(
\mathbb{C}\nabla u^{k-1}_\tau : \nabla u^{k-1}_\tau + 2\tau\mathbb{C}\nabla u^{k-1}_\tau : \nabla v +\tau^2 \mathbb{C}\nabla v : \nabla v + \tau\mathbb{D}\nabla v : \nabla v
\right)\,\md x,
\end{equation}
or, equivalently, after subtracting the constant first term and rescaling by $\frac{1}{\tau}$, minimize
\begin{equation} \label{eq:lin-functional-v}
v \mapsto \frac{1}{2}\int_\O
\left( 2\mathbb{C}\nabla u^{k-1}_\tau : \nabla v +\tau \mathbb{C}\nabla v : \nabla v + \mathbb{D}\nabla v : \nabla v
\right)
\,\md x
\end{equation}
under the constraints $ v \in W^{1,2}(\O;\R^d) $ and $ v=0 $ on $ \Gamma_D$.
We observe that the second term is dominated by the third for small $\tau$.
Therefore we minimize
\begin{equation} \label{eq:lin-lin-functional-v}
v \mapsto \frac{1}{2}\int_\O
\left(
2\mathbb{C}\nabla u^{k-1}_\tau : \nabla v  +\mathbb{D}\nabla v : \nabla v
\right)
\,\md x
\end{equation}
instead.
This problem admits a unique minimizer $v$ which solves the associated Euler-Lagrange equation
\begin{equation}\label{eq:EL-lin-lin}
    -{\rm div}(\mathbb{C}\nabla u^{k-1}_\tau + \mathbb{D}\nabla v ) = 0. \end{equation}
Then $ u^k = u_\tau^{k-1} + \tau v $ fulfils
\begin{equation}\label{eq:EL-lin-lin-step}
    -{\rm div}(\mathbb{C}\nabla u^{k-1}_\tau + \frac{1}{\tau}\mathbb{D}\nabla(  u^k - u_\tau^{k-1})) = 0.
\end{equation}
We note that this is an explicit Euler time-step for the original evolution equation \eqref{eq:lin_evol}, wheres the standard minimizing movement results in the implicit scheme
\begin{equation}\label{eq:EL-lin-step}
    -{\rm div}(\mathbb{C}\nabla u^{k}_\tau + \frac{1}{\tau}\mathbb{D}\nabla( u^k_\tau - u_\tau^{k-1})) = 0.
\end{equation}
Note that we obtained this solution from the linearized functional \eqref{eq:lin-lin-functional-v} and not from \eqref{eq:lin-functional-v}. This suggests that we can write a minimization problem in the velocity $v$ for the dissipated power -- and still obtain a reasonable time-stepping scheme for linearized Kelvin-Voigt viscoelasticity.

Emboldened by this result for the linearized problem -- for which existence of solutions is of course already well known -- we attempt to treat the fully non-linear case.

\section{The limit for the nonlinear visco-elastic minimization problem}

We now consider again the fully nonlinear implicit time-stepping problem for visco-elasticity.

\subsection{Linearization of the minimization problem}
Starting with the time-step minimization problem
\begin{equation}
y_\tau^{k} = \argmin_{w} \int_{\Omega}
\left(
W( \nabla w(x) ) + \frac{1}{2 \tau} D^2( \nabla w(x) , \nabla y_\tau^{k-1}(x) ) \right)
\,\md x
\end{equation}
(where $ D(F,G) = | F^{\top} F - G^{\top} G | $). Now we rewrite this minimization problem by substituting $ y_\tau^{k} = y_\tau^{k-1} + \tau z_{k} $. Then
\begin{equation}
z_{k} = \argmin_{z} {\cal F}_{y_\tau^{k-1},\tau}(z), \end{equation}
where
\begin{equation}
 {\cal F}_{y,\tau}(z) := \frac{1}{\tau} \int_{\Omega} \left( W( \nabla y(x) + \tau \nabla z(x) ) - W( \nabla y(x)) + \frac{1}{2 \tau} D^2( \nabla y(x) + \tau \nabla z(x) , \nabla y(x) ) \right)
 \,\md x
\end{equation}
denotes the power.
For any given previous state, this is now a singularly perturbed functional, so we explore the limit $\lim_{\tau \to 0} {\cal F}_{y,\tau} $. Let us write for the sake of simplicity $ Y(x) := \nabla y(x) $ and $ Z(x) := \nabla z(x) $.
As for the first two terms, we may assess
\begin{equation}
\frac{ W( Y(x) + \tau Z(x) ) - W( Y(x)) }{\tau}
= \partial_F W( Y(x) ) : Z(x) + o( \tau )
\end{equation}
If we assume $ y \in W^{1,\infty}( \Omega ; \R^{d} ) $, then $ | \nabla \partial_F W( \nabla y ) | \le M $.
Since for every $x$ there exists $ \xi(x) \in [0,\tau] $ such that
\begin{equation}
W( Y(x) + \tau Z(x) ) - W( Y(x) )
= \partial_F W( Y(x) ) : \tau Z(x)
+ \nabla \partial_F W( Y(x) ) \xi(x) Z(x) \cdot  \xi(x) Z(x),
\end{equation}
we get
\begin{equation}
\frac{ W( Y(x) + \tau Z(x) ) - W( Y(x)) }{\tau} - \partial_F W( Y(x) ) : Z(x)
= \frac{ \xi(x)^{2} }{\tau}  \partial^2_{F^2} W( Y(x) ) Z(x) \cdot Z(x)
\end{equation}
and
\begin{equation}
\left| \frac{ W( Y(x) + \tau Z(x) ) - W( Y(x)) }{\tau} - \partial_F W( Y(x) ) : Z(x) \right|
\le \tau M | Z(x) |^{2}
\end{equation}

For the second one,
\begin{align*}
\frac{1}{ 2 \tau^{2} } D( Y + \tau Z , Y )^{2} %
& = \frac{1}{ 2 \tau^{2} } | ( Y + \tau Z )^{\top} ( Y + \tau Z ) - Y^{\top} Y |^{2} \\
& = \frac{1}{2} | Z^{\top} Y + Y^{\top} Z + \tau Z^{\top} Z |^{2} \\
& = \frac{1}{2} | Z^{\top} Y + Y^{\top} Z |^{2} + \tau ( Z^{\top} Y + Y^{\top} Z ) : ( Z^{\top} Z ) + \frac{ \tau^{2} }{2} | Z^{\top} Z |^{2} \\
& = 2 | \sm( Y^{\top} Z ) |^{2} + 2 \tau \ Y : Z Z^{\top} Z + \frac{ \tau^{2} }{2} | Z^{\top} Z |^{2}
\end{align*}
Therefore, for fixed $y$ and $z$ we expect that
\begin{equation} \label{eq:gam_limit_min_prob}
\lim_{\tau \to 0} \F_{y,\tau}(z)
= \int_{\Omega}  \left( \partial_F W( \nabla y(x) ) : \nabla z(x) + 2 | \sm( ( \nabla y(x) )^{\top} \nabla z(x) ) |^{2} \right)
\,\md x.
\end{equation}
Then, the solution to the viscoelastic evolution problem is given by
\begin{equation} \label{eq:step}
y^{k} = y^{k-1} + \hat{\tau} z
\end{equation}
for small $\hat{\tau}$, and $z$ calculated as the minimizer of \eqref{eq:gam_limit_min_prob} (with $y=y^{k-1}$) in each time step.

The heuristic interpretation of this method is that we derived a linearized functional (depending on given state) whose minimizer is the velocity in the evolution problem corresponding to the state. Equation \eqref{eq:step} then is nothing but a forward Euler time-step with this velocity.

\subsection{Existence of minimizers}
Let us explore properties of the functional from \eqref{eq:gam_limit_min_prob}.
For that reason, let us fix some $ y \in C^{2}( \overline{\Omega} ; \R^{3} ) $ such that $ \inf_{x \in \Omega} \det \nabla y(x) > 0 $, and denote
\[ \F_y(z) = \int_{\Omega} \left( \partial_F W( \nabla y(x) ) : \nabla z(x) + 2 | \sm( ( \nabla y(x) )^{\top} \nabla z(x) ) |^{2} \right)
\,\md x  \]
for any $ z \in W^{1,2}( \Omega ; \R^{3} ) $.
We immediately note that the quadratic form in the energy is not fully coercive, as $2 | \sm( ( \nabla y(x) )^{\top} \nabla z(x) ) |^{2}  = 0$ for skew-symmetric $( \nabla y(x) )^{\top} \nabla z(x)$. This is in a sense similar to the problem of linearized elasticity. It appears that the presence of the first term $ \partial_F W( \nabla y(x) ) : \nabla z(x)$ means that $\F_y$ is not even bounded from below for directions $\nabla z$ where the linearized dissipation vanishes. This, however, is not the case since the second Piola-Kirchhoff stress tensor $ ( \nabla y(x) )^{-1} \partial_F W( \nabla y(x) ) $ is always symmetric.

Let us for the sake of clarity denote the density function by
\begin{equation}
f_{Y}(Z) = \partial_F W( Y ) : Z + 2 | \sm( Y^{\top}  Z ) |^{2}.
\end{equation}
If $ Y^{-1} \partial_F W(Y) $ is symmetric, for any $ Z \in \R^{d \times d} $ it holds
\begin{equation}
\partial_F W(Y) \colon Z
= Y Y^{-1} \partial_F W(Y) \colon Z
= Y^{-1} \partial_F W(Y) \colon Y^{\top} Z
= Y^{-1} \partial_F W(Y) \colon \sm( Y^{\top} Z ).
\end{equation}
Therefore,
\begin{align*}
f_{Y}(Z)
& = \partial_F W(Y) \colon Z + 2 | \sm ( Y^{\top} Z ) |^{2} \\
& = Y^{-1} \partial_F W(Y) \colon \sm( Y^{\top} Z ) + 2 | \sm ( Y^{\top} Z ) |^{2} \\
& = 2 | \tfrac{1}{4} Y^{-1} \partial_F W(Y) + \sm( Y^{\top} Z ) |^{2} - \tfrac{1}{8} | Y^{-1} \partial_F W(Y) |^{2}.
\end{align*}
Hence, the minimum of $ f_{Y} $ is $ - \frac{1}{8} | Y^{-1} \partial_F W(Y) |^{2} $ and is attained at $Z$ for which it holds that  $ \sm( Y^{\top} Z ) = - \frac{1}{4} Y^{-1} \partial_F W(Y) $, i.e.~the set of minimizers is $ - \frac{1}{4} ( Y Y^{\top} )^{-1} \partial_F W(Y) + Y^{-\top} \Skew(d) $.
Moreover, it holds
\begin{equation*}
f_{Y}(Z) \ge  2 | \sm ( Y^{\top} Z ) |^{2} - | Y^{-1} \partial_F W(Y) | \cdot | \sm( Y^{\top} Z ) |
\end{equation*}
Therefore, the density $ f_{Y} $ and, consequently, the functional $ \F_y $ are bounded from below.

Now we consider the lack of coercivity since the functional only depends on $ \sm( (\nabla y)^{\top} \nabla z) $. Here, we rely mainly on the results in \cite{Neff2002}. Let us start by determining the subspace
\[ \mathcal{N}_{y} = \{ z \in W^{1,2}( \Omega ; \R^{3} ) : \sm( (\nabla y)^{\top} \nabla z) = 0 \}. \]
First we notice
\[ \sm( ( \nabla y )^{\top} \nabla z ) = 0
\iff \sm(  \nabla z ( \nabla y )^{-1} ) = 0. \]
From Lemma 4.1 in \cite{Neff2002} a higher regularity follows, namely $ z \in C^{2}( \overline{\Omega} ; \R^{3} ) $ and $ \nabla z ( \nabla y )^{-1} \in C^{1,1/2}( \overline{\Omega} ; \R^{3 \times 3} ) $.
In Corollary 3.10 in \cite{Neff2002} it was shown that if we have matrix fields $ A , B \in C^{1}( \overline{\Omega} ; \R^{3 \times 3} ) $ such that $A$ is skew-symmetric and $ B = \nabla \psi $, then
\[ \curl(AB) = 0 \ \Longrightarrow \ A \mbox{ is constant}. \]
For $ z \in \mathcal{N}_{y} $ the matrix field $ A = \nabla z ( \nabla y )^{-1} $ is skew-symmetric. If we take $ B = \nabla y $, all the assumptions are fulfilled since
\[ \curl(AB)
= \curl( \nabla z ( \nabla y )^{-1} \nabla y )
= \curl \nabla z
= 0. \]
Hence, $ A $ is constant and thus $ \nabla z = A \nabla y $. Consequently, $ z = a + A y $ for some vector $ a \in \R^{3} $. Equivalently,
$ z = a + \omega \times y $ for some vectors $ a , \omega \in \R^{3} $.
Thus, we have shown
\[ \mathcal{N}_{y} = \{ z \in W^{1,2}( \Omega ; \R^{3} ) : \sm( (\nabla y)^{\top} \nabla z) = 0 \}
= \R^{3} + \Skew(3) y. \]
It is a 6-dimensional subspace.

\begin{theorem}
\label{theo:lin-functional-min}
For a given $ y\in C^{2}(\overline{\Omega};\R^{3})$, let us consider the functional
\[
\F_y(z) = \int_{\Omega}  \left( \partial_F W( \nabla y(x) ) : \nabla z(x) + 2 | \sm( ( \nabla y(x) )^{\top} \nabla z(x) ) |^{2} \right)
\,\md x.
\]
\begin{enumerate}
    \item[(a)] $ \F_{y} $ admits an unique minimizer on any subspace $ H \subset W^{1,2}(\Omega) $ with $ H \cap \mathcal{N}_{y} = \{ 0 \} $. This includes the case $ H = \{ z \in W^{1,2}( \Omega ) : z|_{\Gamma_{D}} = 0 \} $ for some smooth $ \Gamma_{D} \subset \partial \Omega $ with $ \mathcal{H}^{d-1}( \Gamma_{D} ) > 0 $.
    \item[(b)] On the whole $ W^{1,2}( \Omega ) $ minimizers exists and are unique up to an addition of elements from $ \mathcal{N}_{y} $.
\end{enumerate}
\end{theorem}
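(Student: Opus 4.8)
The crux is a Korn-type coercivity estimate for the non-constant, non-symmetric coefficient field $F := \nabla y$: one needs the seminorm $z \mapsto \|\sm(F^{\top}\nabla z)\|_{L^2}$ to control the full $W^{1,2}$-norm once the rigid-type kernel $\mathcal N_y$ has been quotiented out. I would obtain this from Neff's generalized first Korn inequality with non-constant coefficients in \cite{Neff2002}: since $y \in C^2(\overline\Omega)$ and $\inf_\Omega\det\nabla y > 0$, the field $F \in C^1(\overline\Omega;\GL_+(3))$ is a compatible gradient field, so there is $c>0$ with
\[
\|\nabla z\|_{L^2}^2 \le c\bigl(\|z\|_{L^2}^2 + \|\sm(\nabla z\,F^{-1})\|_{L^2}^2\bigr),\qquad z\in W^{1,2}(\Omega;\R^3).
\]
Using $\sm(F^{\top}\nabla z) = F^{\top}\sm(\nabla z\,F^{-1})F$ and the boundedness of $F$ and $F^{-1}$ on $\overline\Omega$, the two strain quantities have equivalent $L^2$-norms, so the same estimate holds with $\sm(F^{\top}\nabla z)$ in place of $\sm(\nabla z\,F^{-1})$.

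\textbf{Part (a).} I would take $H$ closed (as it is in the stated Dirichlet example) and first upgrade the above to $\|z\|_{W^{1,2}}^2 \le C\|\sm(F^{\top}\nabla z)\|_{L^2}^2$ for all $z\in H$, by the standard compactness argument: a normalized sequence in $H$ with vanishing strain would, by the displayed inequality plus Rellich, be Cauchy in $W^{1,2}$, and its limit would lie in $\mathcal N_y\cap H=\{0\}$ --- a contradiction. Granting this, write $\F_y = \ell + q$ on $H$ with $\ell(z)=\int_\Omega\partial_F W(\nabla y):\nabla z$ a bounded linear functional ($\partial_F W(\nabla y)\in C^0(\overline\Omega)$ by the smoothness of $W$ on $\GL_+$) and $q(z)=2\int_\Omega|\sm(F^{\top}\nabla z)|^2$ a nonnegative quadratic form which, by the upgraded estimate, is positive definite on $H$. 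Hence $\F_y$ is coercive, convex and strongly (so weakly) lower semicontinuous on $H$, the direct method produces a minimizer, and strict convexity of $\F_y$ on $H$ gives uniqueness. For the Dirichlet case, $H=\{z:z|_{\Gamma_D}=0\}$ is closed (kernel of the trace), and $H\cap\mathcal N_y=\{0\}$ follows by rigidity: $z\in\mathcal N_y$ means $z(x)=a+Ay(x)$ with $a\in\R^3$, $A\in\Skew(3)$, and $z$ is continuous; vanishing on the smooth set $\Gamma_D$ of positive measure forces, upon differentiating along $\Gamma_D$, that $A\nabla y$ annihilate a $2$-dimensional tangent plane, whence $A=0$ (an invertible $\nabla y$ and a nonzero skew matrix in $\R^{3\times3}$ cannot do this), and then $a=0$.

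\textbf{Part (b).} I would note that $\F_y$ is invariant under translation by $\mathcal N_y$: for $w\in\mathcal N_y$ one has $\sm(F^{\top}\nabla(z+w))=\sm(F^{\top}\nabla z)$, and $\ell(w)=0$ because $\partial_F W(Y):AY = Y^{-1}\partial_F W(Y):\sm(Y^{\top}AY)=0$, using $\sm(Y^{\top}AY)=Y^{\top}\sm(A)Y=0$ for skew $A$ together with the symmetry of the second Piola--Kirchhoff stress (the identity established just before the theorem). Since $\mathcal N_y$ is finite-dimensional it has a closed complement $H$ in $W^{1,2}(\Omega;\R^3)$ with $H\cap\mathcal N_y=\{0\}$; part (a) yields the unique minimizer $z_\ast\in H$. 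Writing an arbitrary $z=z_H+w$ with $z_H\in H$, $w\in\mathcal N_y$, gives $\F_y(z)=\F_y(z_H)\ge\F_y(z_\ast)=\F_y(z_\ast+w)$, so the set of minimizers is exactly $z_\ast+\mathcal N_y$.

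\textbf{Expected obstacle.} Everything downstream of the Korn inequality is routine (lower boundedness and the characterization of $\mathcal N_y$ are already in hand and can simply be quoted); the genuine work is the Korn-type inequality with the non-constant, non-symmetric coefficient $F=\nabla y$ --- in particular verifying that the regularity available here ($y\in C^2$, $\det\nabla y$ bounded below) meets the hypotheses of \cite{Neff2002}, and transferring the estimate cleanly between the two equivalent strain measures $\sm(F^{\top}\nabla z)$ and $\sm(\nabla z\,F^{-1})$.
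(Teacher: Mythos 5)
Your argument is correct and follows essentially the same route as the paper: the generalized Korn inequality of Neff (with the compactness upgrade on subspaces transversal to $\mathcal{N}_y$) gives coercivity, strict convexity plus the direct method gives existence and uniqueness on $H$, and part (b) follows by splitting off a closed complement of the finite-dimensional kernel $\mathcal{N}_y$ and using the invariance of $\F_y$ under $\mathcal{N}_y$-translations. The only departures are cosmetic: you prove $H\cap\mathcal{N}_y=\{0\}$ for the Dirichlet subspace by a direct rigidity argument where the paper cites Theorem 4.3 of Neff, and you spell out the vanishing of the linear term on $\mathcal{N}_y$, which the paper leaves implicit in its completion-of-the-square computation.
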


The proof will rely on generalized Korn's second inequality. For the proof, see Corollary 4.6, 4.7 and Section 5 in \cite{Neff2002}.
\begin{lemma}[Generalized Korn's second inequality]
Let $ \Omega \subset \R^{3} $ be a Lipschitz domain and $ y \in C^{1}( \overline{\Omega} ; \GL(3) ) $. Then
\[ z \mapsto \| \sm ( (\nabla y)^{\top} \nabla z ) \|_{L^{2}(\Omega)} + \| z \|_{L^{2}(\Omega)} \]
is a norm on $ W^{1,2}( \Omega ; \R^{3} ) $ that is equivalent to $ \|\cdot\|_{W^{1,2}( \Omega )} $.
\end{lemma}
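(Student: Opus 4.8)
The quantity in question is manifestly a norm: homogeneity and the triangle inequality hold because $z \mapsto \|\sm((\nabla y)^\top \nabla z)\|_{L^2(\O)}$ is a seminorm and $z\mapsto\|z\|_{L^2(\O)}$ is a norm, and positive-definiteness is already supplied by the second summand alone (if the sum vanishes then $\|z\|_{L^2(\O)}=0$, hence $z=0$). One of the two equivalence inequalities is equally immediate: since $\nabla y \in C^{0}(\overline\O)$ is bounded, $\|\sm((\nabla y)^\top\nabla z)\|_{L^2(\O)} \le \|\nabla y\|_{\infty}\,\|\nabla z\|_{L^2(\O)}$, so the proposed quantity is $\le C\|z\|_{W^{1,2}(\O)}$. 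The whole content of the lemma is therefore the reverse, coercive, estimate
\[
\|z\|_{W^{1,2}(\O)} \;\le\; C\big( \|\sm((\nabla y)^\top\nabla z)\|_{L^2(\O)} + \|z\|_{L^2(\O)} \big),
\]
which is the genuine Korn-type statement.

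The plan is to reduce this to the classical Korn second inequality through the substitution $\tilde z := (\nabla y)^\top z$. For a constant matrix $\nabla y$ this is exact, since then $\nabla\tilde z = (\nabla y)^\top\nabla z$ and $\sm\nabla\tilde z = \sm((\nabla y)^\top\nabla z)$, so the classical inequality applied to $\tilde z$ transfers verbatim. In general, differentiating the product gives
\[
\nabla\tilde z \;=\; (\nabla y)^\top\nabla z \,+\, Q(z), \qquad Q(z)_{ij} = \textstyle\sum_k (\partial_j\partial_i y_k)\,z_k,
\]
whence $\sm\nabla\tilde z = \sm((\nabla y)^\top\nabla z) + \sm Q(z)$, where $Q$ is a zeroth-order (algebraic) operator in $z$ with $|Q(z)|\le |\nabla^2 y|\,|z|$. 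As the applications use $y\in C^{2}(\overline\O)$, the field $\nabla^2 y$ is bounded and $Q(z)$ is controlled in $L^2(\O)$ by $C\|z\|_{L^2(\O)}$; in particular $\tilde z\in W^{1,2}(\O;\R^3)$ and classical Korn yields $\|\tilde z\|_{W^{1,2}(\O)}\le C(\|\sm\nabla\tilde z\|_{L^2(\O)}+\|\tilde z\|_{L^2(\O)})$.

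It then remains to unwind the substitution. The two lower-order contributions are harmless: $\|\sm\nabla\tilde z\|_{L^2(\O)}\le\|\sm((\nabla y)^\top\nabla z)\|_{L^2(\O)}+C\|z\|_{L^2(\O)}$ and $\|\tilde z\|_{L^2(\O)}\le\|\nabla y\|_\infty\|z\|_{L^2(\O)}$. For the leading term I use that $\inf_{\overline\O}\det\nabla y>0$ together with continuity on the compact set $\overline\O$ gives a uniform bound on $(\nabla y)^{-1}$, hence the pointwise coercivity $|(\nabla y)^\top\nabla z|\ge c\,|\nabla z|$; combined with $\nabla z = (\nabla y)^{-\top}(\nabla\tilde z - Q(z))$ this gives $\|\nabla z\|_{L^2(\O)}\le C(\|\nabla\tilde z\|_{L^2(\O)}+\|z\|_{L^2(\O)})$. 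Chaining these estimates and adding $\|z\|_{L^2(\O)}$ reproduces exactly the coercive inequality above, which with the easy upper bound yields the asserted equivalence.

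The main obstacle is not the reduction — which is purely algebraic — but the classical Korn second inequality itself, whose proof on a Lipschitz domain is nontrivial (e.g.\ via the Ne\v{c}as/Lions lemma $\|w\|_{L^2}\le C(\|w\|_{H^{-1}}+\|\nabla w\|_{H^{-1}})$ applied to $\partial_k\partial_l w_i=\partial_l\,(\sm\nabla w)_{ik}+\partial_k\,(\sm\nabla w)_{il}-\partial_i\,(\sm\nabla w)_{kl}$). I treat this as a known classical fact. A secondary issue is the precise regularity: my substitution needs $\nabla^2 y$ bounded, which holds in the $C^2$ setting used throughout the paper, whereas for the bare hypothesis that $\nabla y$ is merely continuous the commutator $Q(z)$ is unavailable. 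In that case one instead freezes the coefficient $\nabla y$ on a finite cover of $\overline\O$ by small sets, applies the \emph{exact} constant-coefficient reduction on each piece, and patches with a partition of unity; the perturbation error is bounded by the modulus of continuity of $\nabla y$ and absorbed into the left-hand side once the cover is fine enough. This localization is the technical heart of the minimal-regularity version and is carried out in \cite{Neff2002}.
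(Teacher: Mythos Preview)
Your argument is correct. The paper does not actually prove this lemma: it simply quotes the statement and refers to Corollaries~4.6, 4.7 and Section~5 of \cite{Neff2002} for the proof. So you have gone further than the paper by sketching an actual argument.

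Your reduction via the substitution $\tilde z = (\nabla y)^\top z$ is clean and correct for $y\in C^2(\overline\Omega)$: the commutator $Q(z)$ is zeroth order, classical Korn applies to $\tilde z$, and the unwinding works because $(\nabla y)^{-1}$ is uniformly bounded. One cosmetic remark: the lemma only assumes $\nabla y(x)\in\GL(3)$, not $\det\nabla y>0$, so your sentence ``$\inf_{\overline\Omega}\det\nabla y>0$'' is slightly stronger than what is given; but the conclusion you need --- that $(\nabla y)^{-1}$ is bounded on $\overline\Omega$ --- follows directly from continuity of $A\mapsto A^{-1}$ on $\GL(3)$ and compactness of $\overline\Omega$, so nothing is lost. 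You also correctly identify that the bare $C^1$ hypothesis forces the freezing-of-coefficients/partition-of-unity route, which is indeed what \cite{Neff2002} does; since the applications in the paper take $y\in C^2(\overline\Omega)$ anyway, your direct substitution already covers what is needed there.
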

As for the standard Korn's inequality, one proves that for any subspace $ H \subset W^{1,2}( \Omega ; \R^{3} ) $ with $ H \cap \Skew(3) y = \{0\} $ there exists a constant $C$ such that for every $ z \in H $
\[ \| \sm( (\nabla y)^{\top} \nabla z ) \|_{ L^{2}( \Omega ) } \ge C \| \nabla z \|_{ L^{2}( \Omega ) }. \]

\begin{proof}[Proof of Theorem~\ref{theo:lin-functional-min}]
First, let us consider a subspace $H$ with $ H \cap \mathcal{N}_{y} = \{ 0 \} $. The functional $ \F_y $ is strictly convex on $H$.
This can be easily seen since for a given $Y \in \R^{d \times d} $ the function
\[ f_{Y}(Z) = Y^{-1} \partial_F W(Y) \colon \sm( Y^{\top} Z ) + 2 | \sm ( Y^{\top} Z ) |^{2} \]
is a composition of a linear function $ Z \mapsto \sm( Y^{\top} Z ) $ and a strictly convex function
$ X \mapsto Y^{-1} \partial_F W(Y) \colon X + 2 |X|^2 $.
Moreover, as
\[ \F_y(z) \ge C \| \sm( (\nabla y)^{\top} \nabla z ) \|_{ L^{2}( \Omega ) } - C \ge C \| \nabla z \|_{ L^{2}( \Omega ) } - C, \]
there is a unique minimizer $ z_{H} $ on $H$.

See Theorem 4.3 in \cite{Neff2002} for the proof of
$ \{ z \in W^{1,2}( \Omega ) : z|_{\Gamma_{D}} = 0 \} \cap \mathcal{N}_{y} = \{0\} $.

For the whole space case, let $ H = \mathcal{N}_{y}^{\perp} $ be the orthogonal complement of $ \mathcal{N}_{y} $ in $ W^{1,2}( \Omega ; \R^{3} ) $.
By (a) there is a unique minimizer $ z_{\mathcal{N}_{y}^{\perp}} $ on $\mathcal{N}_{y}^{\perp}$. Therefore, $ \F_y $ admits a minimizer on $ W^{1,2}( \Omega ; \R^{3} ) $ and the set of all minimizers reads
\[ z_{\mathcal{N}_{y}^{\perp}} + \mathcal{N}_{y}. \]
\end{proof}
\begin{remark}
Since by \eqref{eq:R-concrete}
\[
\F_y(z) = \int_{\Omega}  \Big( \partial_F W( \nabla y(x) ) : \nabla z(x) + R ( \nabla y(x) , \nabla z(x) ) \Big)
\,\md x,
\]
the Euler-Lagrange equation for the functional $ \F_y $ is
\begin{equation}
    0 = - {\rm div} \Big( \partial_{F} W( \nabla y ) + \partial_{ \dot{F} } R ( \nabla y(x) , \nabla z(x) ) \Big).
\end{equation}
or
\begin{equation}
\label{eq:E-L-F-u}
0 = - {\rm div} \Big( \partial_{F} W( \nabla y ) + 2  \nabla y ( \nabla z )^{\top}  \nabla y + 2  \nabla y ( \nabla y )^{\top}  \nabla z \Big).
\end{equation}
Let us return to \eqref{eq:step} and suppose that $z$ is a minimizer of $ \F_{y^{k-1}} $. By plugging
$ z = (y^{k} - y^{k-1})/\tau $
in \eqref{eq:E-L-F-u}, we arrive at
\begin{equation} -{\rm div}\Big(\partial_F W( \nabla y^{k-1} ) + \partial_{\dot F} R(\nabla y^{k-1},  \frac{ \nabla y^{k} - \nabla y^{k-1} }{\tau} )  \Big) = 0.
\end{equation}
Comparing this to \eqref{eq:PDEvisco-formulation}
\begin{equation}
-{\rm div}\Big(\partial_FW(\nabla y^k_\tau)   + \partial_{\dot F}R(\nabla y_\tau^k, \frac{\nabla y^k_\tau-\nabla y^{k-1}_\tau}{\tau})  \Big) = 0,
\end{equation}
we notice that in the limit $ \tau \to 0 $, the new iteration scheme should yield the right solution -- since both are discretizations of the evolution equation \eqref{eq:viscoel}.
\end{remark}

\section{Numerical results} \label{sec:numerics}
We now consider a numerical implementation of the time-step method given in equation \eqref{eq:step}, where the velocity $z$ is computed in each time-step by minimizing \eqref{eq:gam_limit_min_prob}. For simplicity, we denote the time-step size here again by $\tau$. Our implementation is based on finite elements, so we approximate
\[
y^{k-1} \approx y_{h}^{k-1} = \sum_j \left(\mathbf{y}^{k-1}\right)_j \phi_j
\]
with a vector $\mathbf{y}^{k-1}$ of degrees of freedom and finite element basis functions $\phi_j$.
In each time-step we thus solve a linear system of the form
\[
K\mathbf{z} = \mathbf{b},
\]
where the symmetric system matrix is given by
\begin{equation} \label{eq:stiffness}
K_{ij} = \int_{\Omega} c\left((\nabla \phi_i)^\top \nabla y_{h}^{k-1} + (\nabla y_{h}^{k-1})^\top \nabla \phi_i\right) :  \left((\nabla \phi_j)^\top \nabla y_{h}^{k-1} + (\nabla y_{h}^{k-1})^\top \nabla \phi_j\right) \,\md x
\end{equation}
corresponding to a nonlinear dissipation potential $D(F,G) = c|F^\top F - G^\top G|$, compared to the exposition above, we introduced a scaling parameter $c>0$. The right hand side vector is
\[
\mathbf{b}_i = -\int_\Omega \left(\partial_F W(y_{h}^{k-1})\right) : \nabla \phi_i \,\md x.
\]
The linear system is solved using the sparse direct Cholesky solver CHOLMOD \cite{Cholmod} after a small shift of $K$ by $\delta M$ (with finite element mass matrix $M$) to ensure positive definiteness.

Finally we compute
\begin{equation} \label{eq:discrete_timestep}
\mathbf{y}^{k} = \mathbf{y}^{k-1} + \tau \mathbf{z}
\end{equation}
and iterate this procedure.
The finite element system can be endowed with appropriate boundary conditions, body forces, and surface tractions in the usual way. In the following we use the Neo-Hookean material model
\begin{equation} \label{eq:neohook}
W(F) = \frac{\mu}{2} \left(|F|^2-3-2\log\det F\right) + \frac{\lambda}{2} \left(\det F -1\right)^2
\end{equation}
with Lamé parameters $\mu$ and $\lambda$. All 3d simulations are performed using P2 finite elements on a tetrahedral mesh.

\paragraph{Experiment 1: a viscoelastic plate.}

\newcommand\factor{0.95}
\begin{figure}
\begin{subfigure}{0.3\textwidth} \centering
  \includegraphics[width=\factor\textwidth,trim={0 0 0 2.6cm},clip=true]{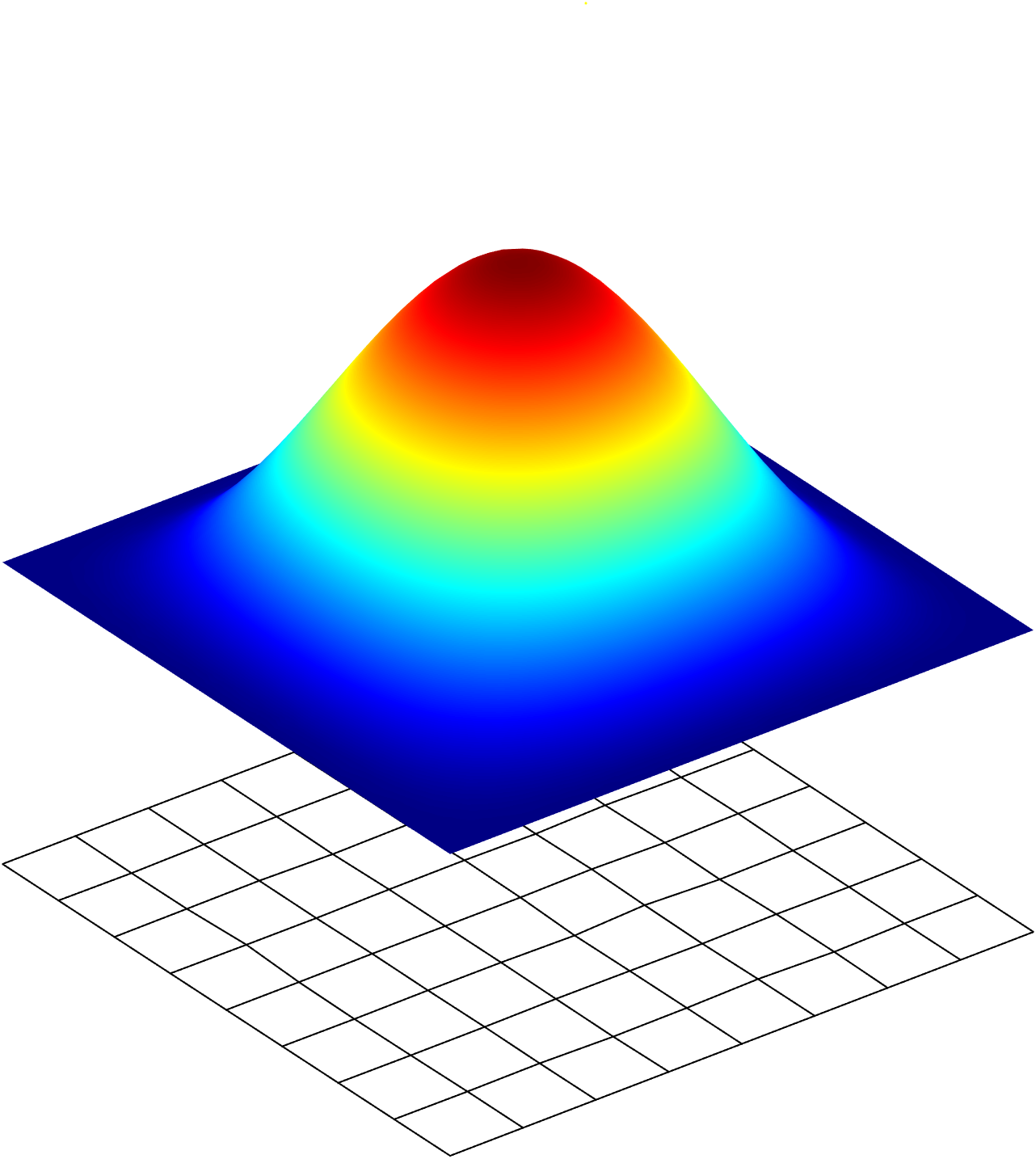} \\[5mm]
  \includegraphics[width=\factor\textwidth,trim={0 0 0 2.6cm},clip=true]{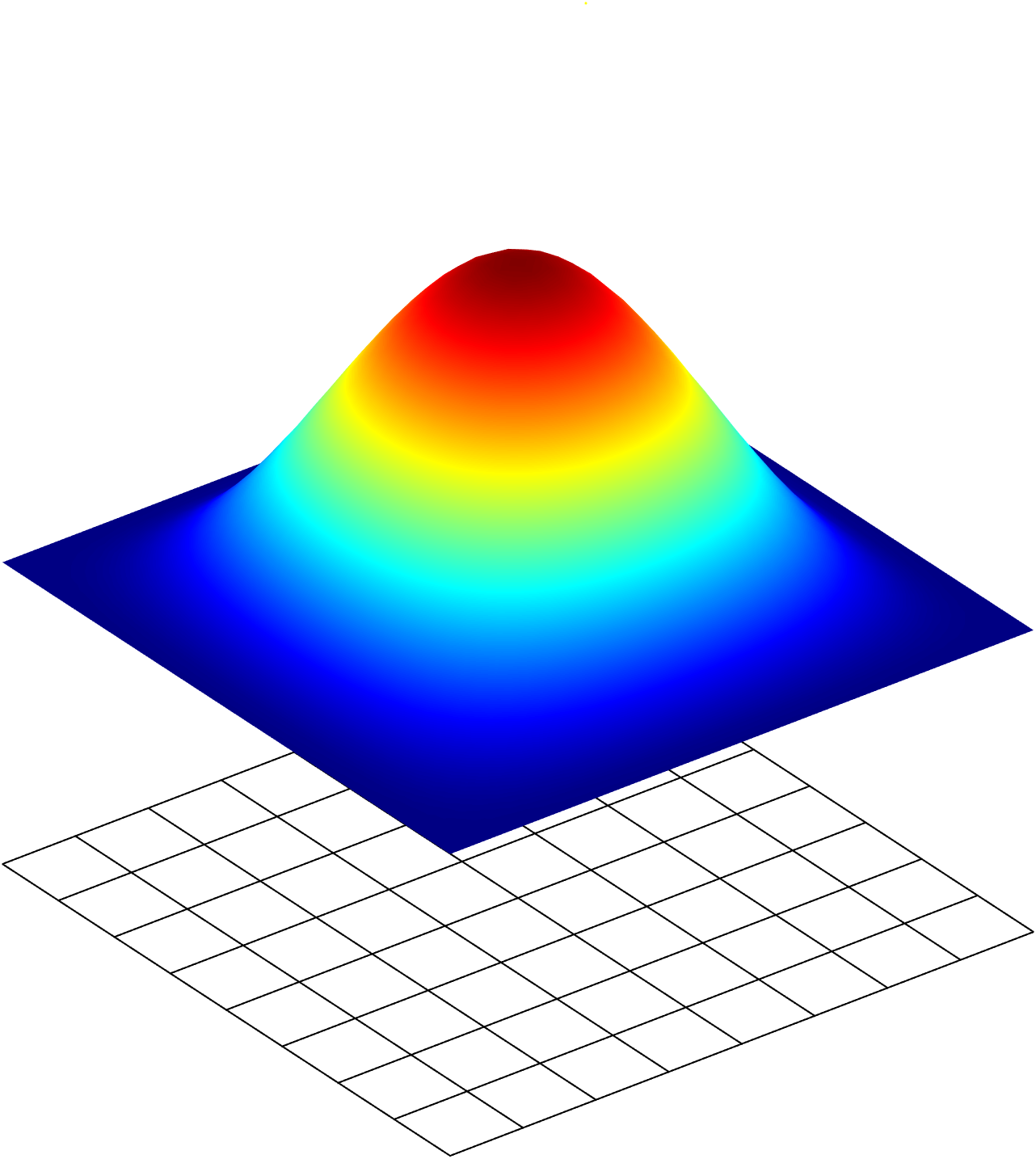}
  \caption{$t=0$.} \end{subfigure}
  \begin{subfigure}{0.3\textwidth} \centering
  \includegraphics[width=\factor\textwidth,trim={0 0 0 2.6cm},clip=true]{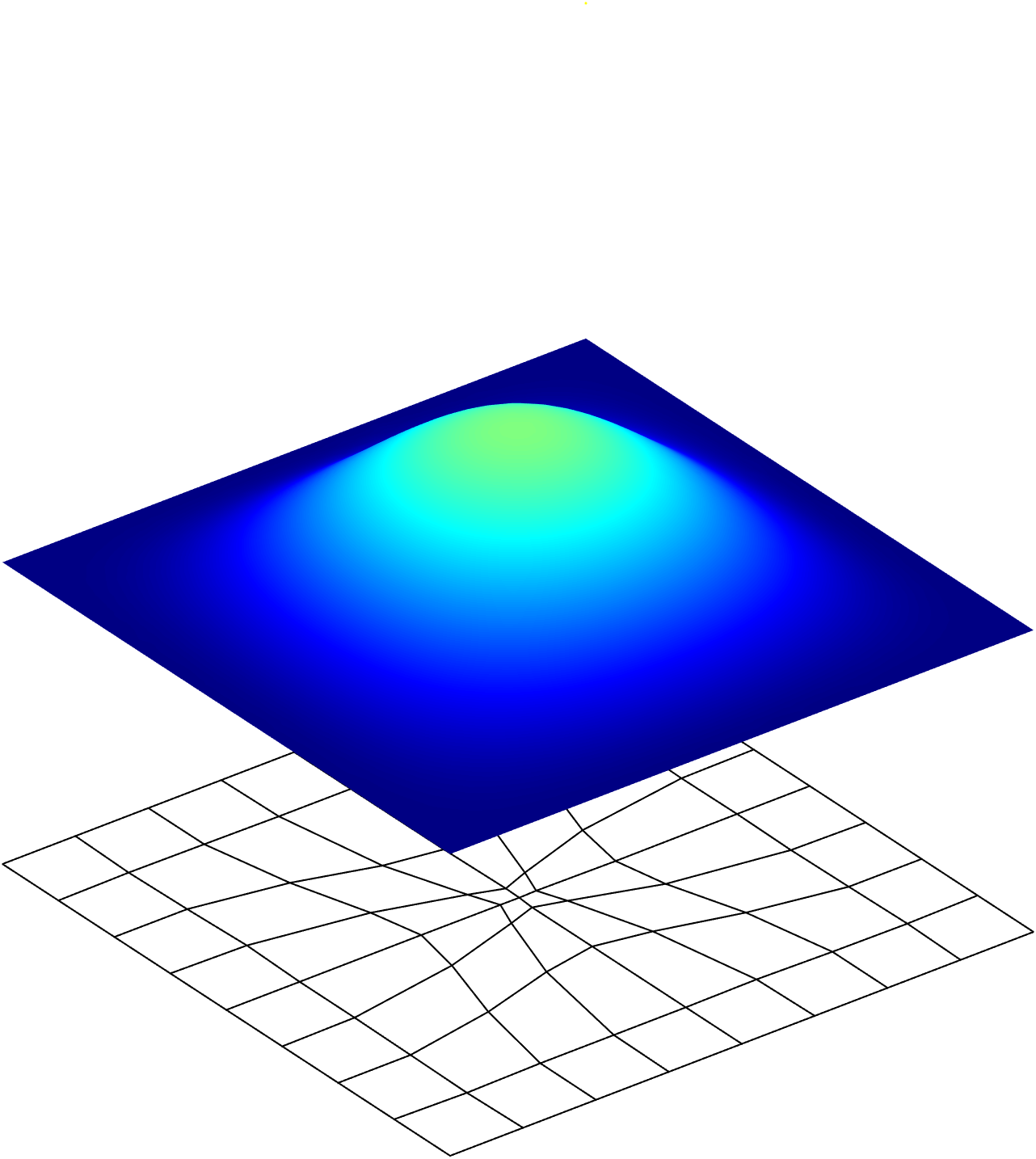} \\[5mm]
  \includegraphics[width=\factor\textwidth,trim={0 0 0 2.6cm},clip=true]{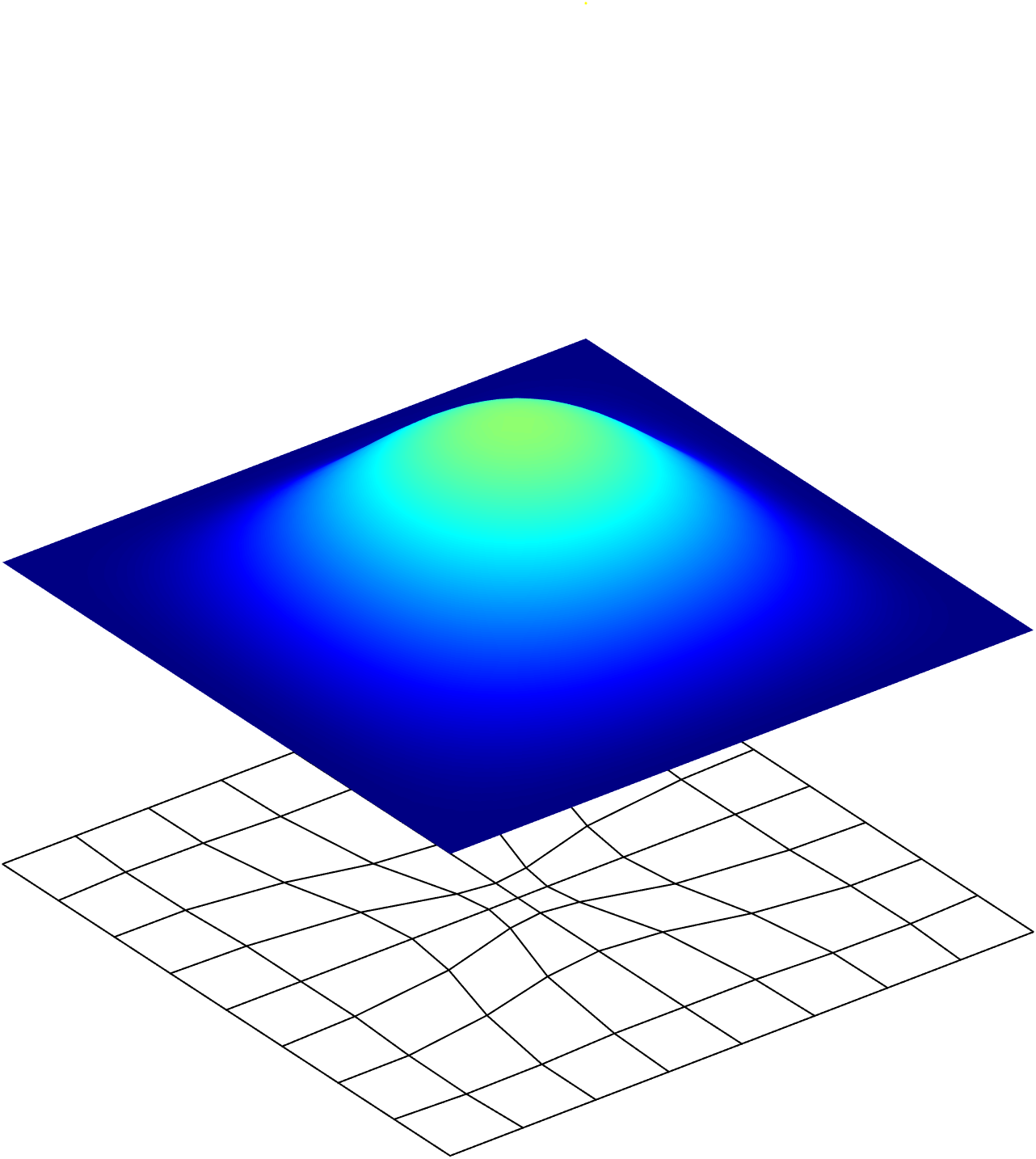}
  \caption{$t=4$.} \end{subfigure}
  \begin{subfigure}{0.3\textwidth} \centering
  \includegraphics[width=\factor\textwidth,trim={0 0 0 2.6cm},clip=true]{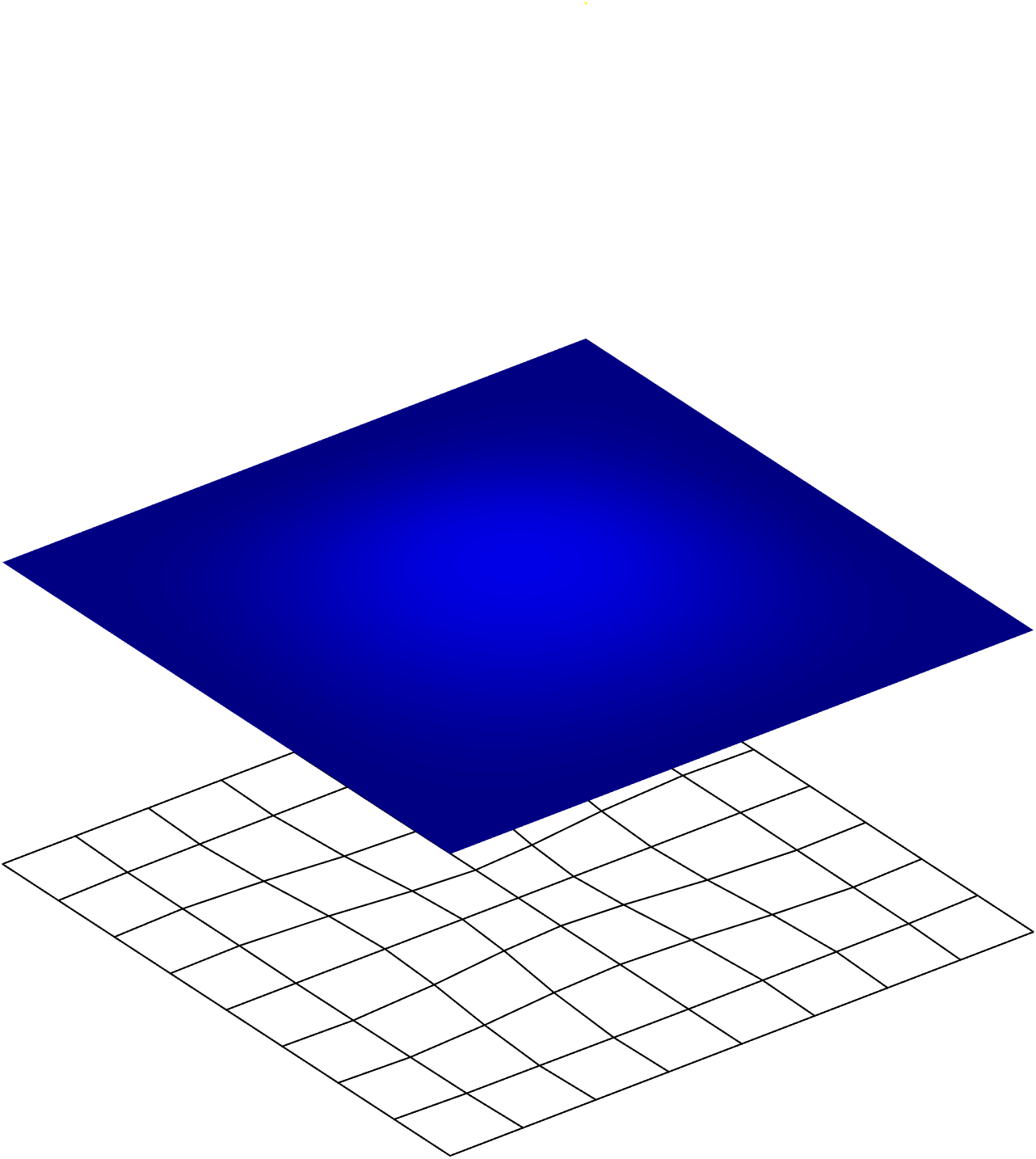} \\[5mm]
  \includegraphics[width=\factor\textwidth,trim={0 0 0 2.6cm},clip=true]{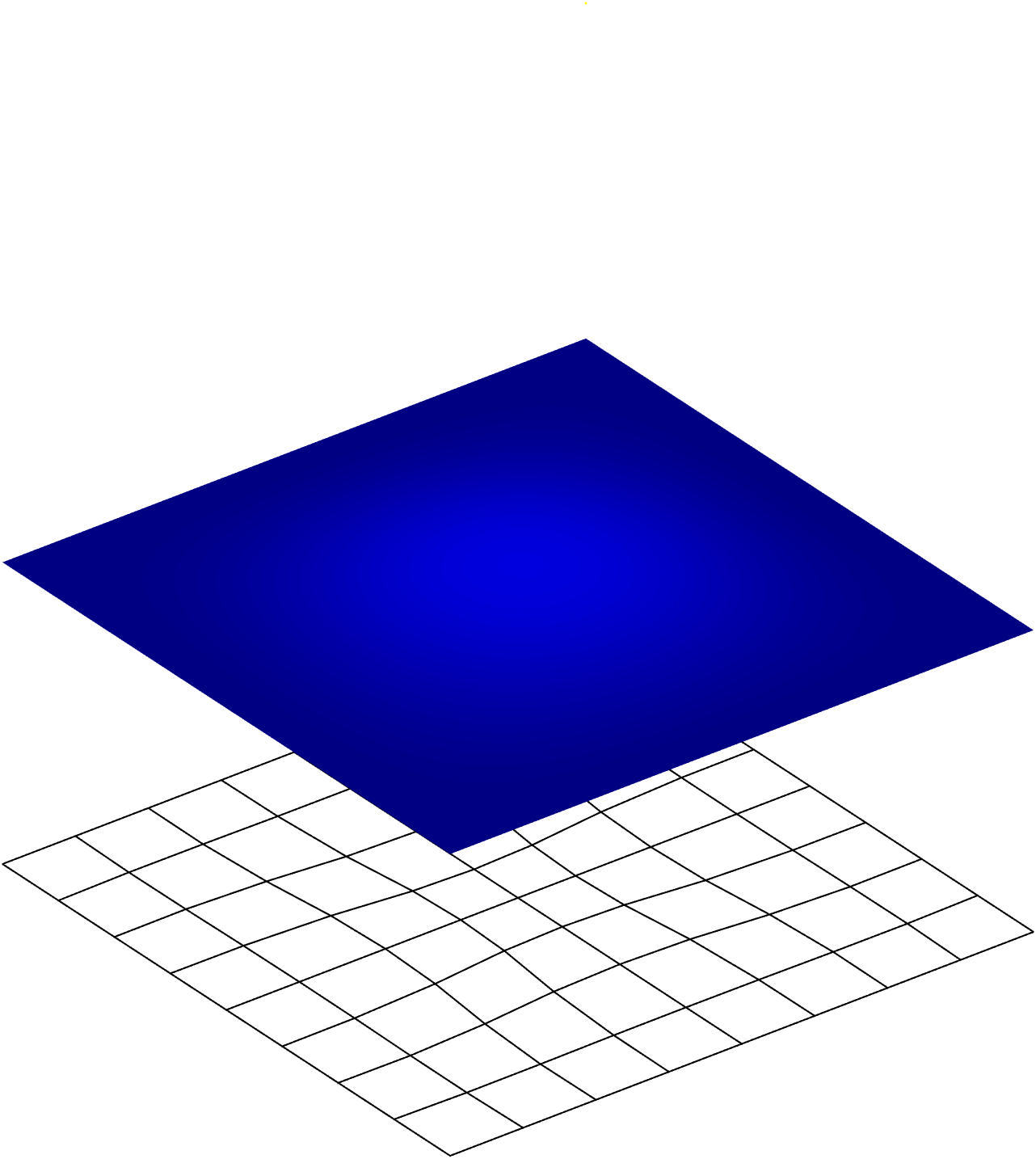}
  \caption{$t=13$.} \end{subfigure}
  \begin{subfigure}{0.045\textwidth} \centering
  \includegraphics[width=\factor\linewidth]{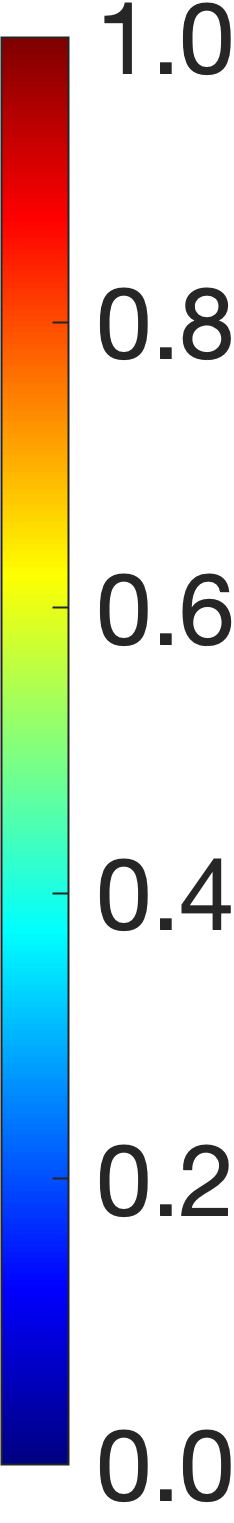}
  \end{subfigure}

\caption{Comparison of the simulation results using the 3d viscoelasticity method introduced here (top row, shown are out-of-plane ($v$) and in-plane ($u$) deformation of the mid-plane of the 3d plate), and the results using the limiting viscoelastic Föppl–von Kármán plate using the method from \cite{mfmkjv} (bottom row). The in-plane deformation is exaggerated by a factor of 6.}
\end{figure}

\begin{figure}
\begin{subfigure}[t]{0.49\textwidth} \centering
\includegraphics[width=\factor\textwidth]{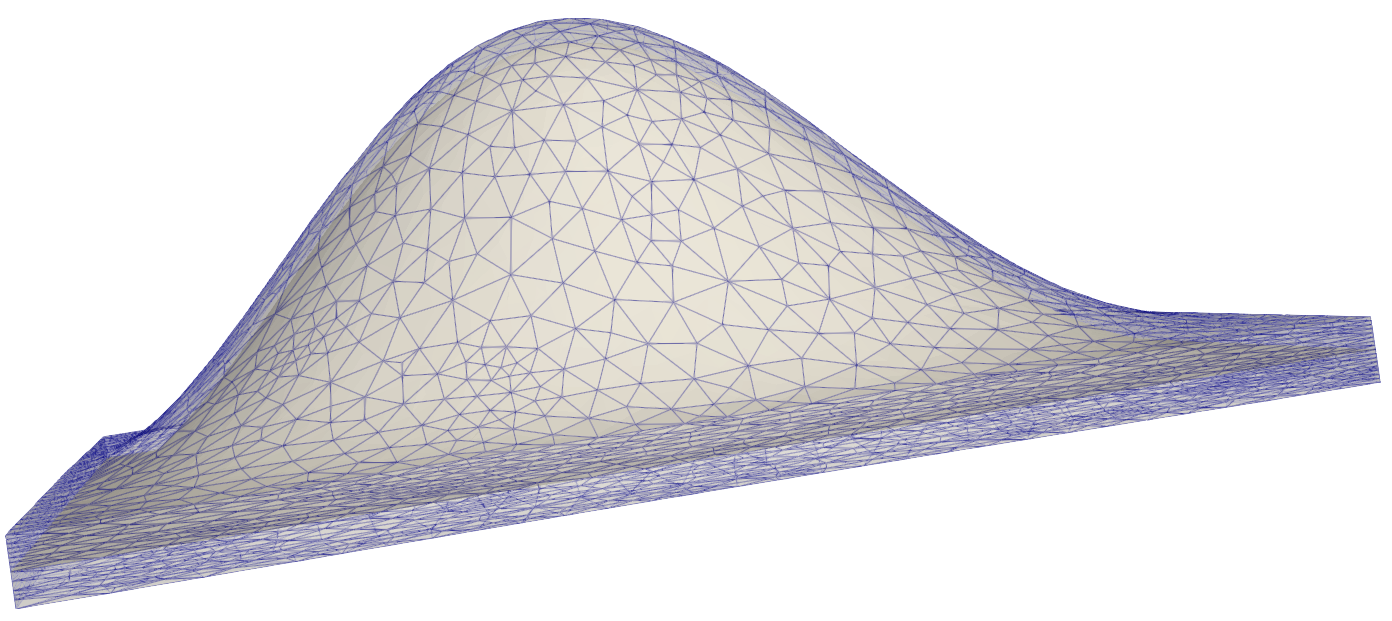}
\caption{3d deformation at $t=2$. The out of plane deformation has been rescaled according to the Föppl–von Kármán scaling. The mid-plane (which is also seen in Figure \ref{fig:plate_sim1}) is plotted opaque, the remaining material is transparent. The surface triangles of the tetrahdral mesh are shown in blue.}
\end{subfigure}\;\;\;\;\;\;
\begin{subfigure}[t]{0.49\textwidth} \centering
\includegraphics[width=\factor\textwidth]{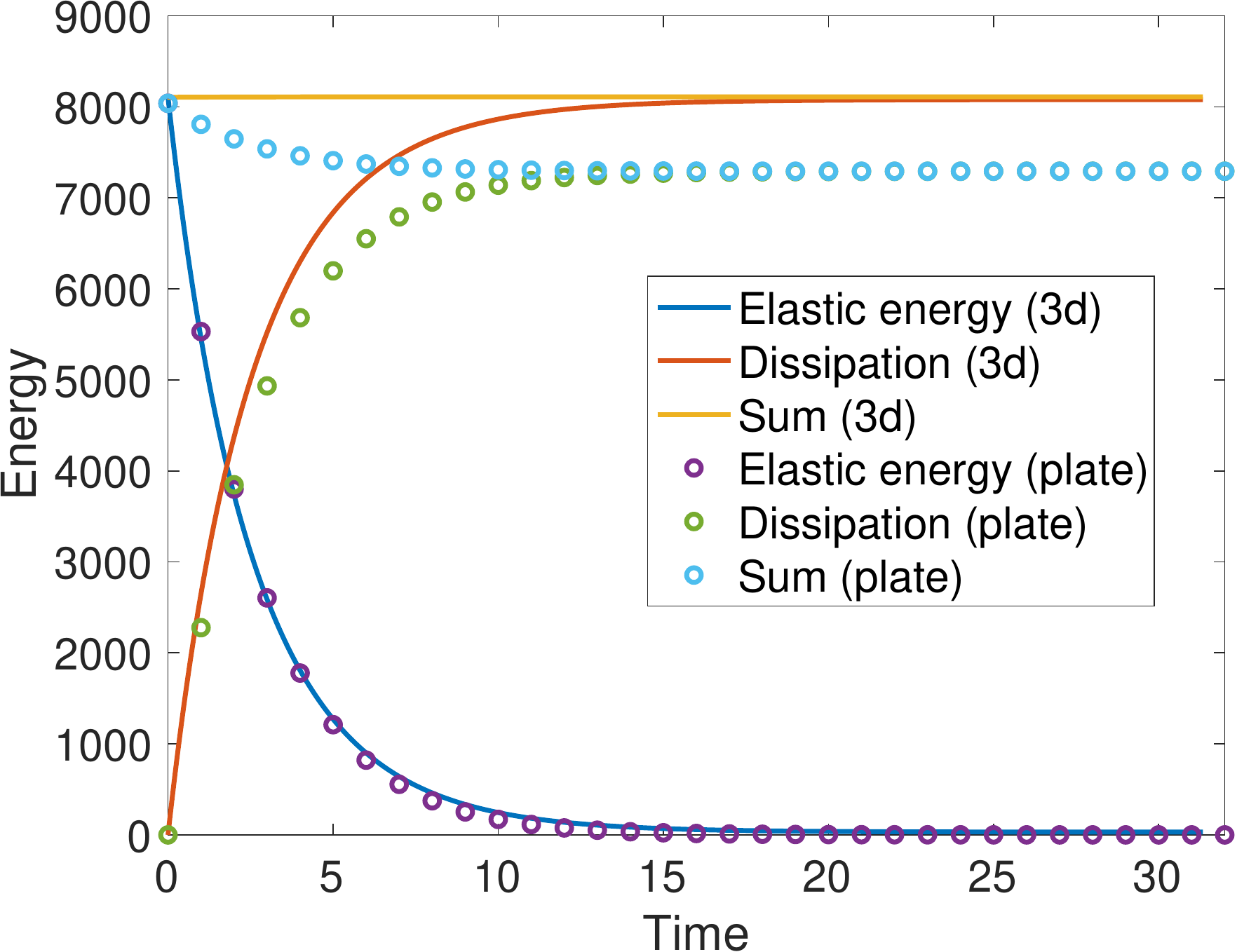}
\caption{The potential energy, the cumulative dissipation, and their sum. The results from the 2d plate simulation are shown as circles, the solid lines correspond to the 3d simulation.}
\end{subfigure}
\caption{The 3d deformation state and the time dependent elastic and dissipated energy.} \label{fig:plate_sim2}
\end{figure}

In \cite{mfmk} a rigorous thin plate limit for nonlinear viscoelastic materials was derived in the Föppl–von Kármán-scaling regime. Numerical experiments for the thin plate equations were performed in \cite{mfmkjv} combining  the Bogner-Fox-Schmit (BFS) finite elements \cite{BFS,Valdman} for the out-of-plane deformation $v\colon \omega \to \R$ and Q1 elements for the in-plane deformation $u\colon \omega\to \R^2$ on the plate domain $\omega = (-1,1)^2\subset \R^2$. We use this setting as a first test case for our method, computing the viscoelastic relaxation of an initially deformed plate with clamped boundary conditions. As initial condition we choose
\begin{align*}
v(x_1,x_2) &= (x_1-1)^2(x_2-1)^2, \\
u(x_1,x_2) &= 0.
\end{align*}
The thin plate model minimizes the energy $\phi(u,v) +\frac{1}{2\tau}\mathcal{D}^2( (u_{n-1},v_{n-1}),(u,v)),$
where the von K\'arm\'an energy functional $\phi$ is defined as
\begin{align}\label{eq: phi0}
{\phi}(u,v) := \int_S \frac{1}{2}Q_W^2\Big( e(u) + \frac{1}{2} \nabla v \otimes\nabla v \Big) + \frac{1}{24}Q_W^2(\nabla^2 v)\,\mathrm{d}x
\end{align}
and the global dissipation distance $\mathcal{D}$ as
\begin{multline}\label{eq: D,D0-2}
{\mathcal{D}}( (u_0,v_0),(u_1,v_1)) := \Big(\int_S Q^2_D\Big( e( u_1) - e(u_0) + \frac{1}{2} \nabla v_1 \otimes \nabla v_1 - \frac{1}{2} \nabla v_0 \otimes \nabla v_0 \Big) \\  +  \frac{1}{12}   Q_D^2\big(\nabla^2 v_1 - \nabla^2 v_0 \big)  \Big)^{1/2} \,\mathrm{d}x
\end{multline}
respectively.
As $Q^2_W$ and $Q^2_D$ we take the limiting Föppl–von Kármán plate material, i.e.,
$$Q^2_W(G ):= \frac{2\mu\lambda}{2\mu+\lambda} {\rm tr}^2(G)+ 2\mu|G|^2, \qquad Q^2_D(G):= 4c|G|^2\ , \ c>0\ $$
  for every  symmetric $G\in \R^{2\times 2}_{\rm sym}$. The constants $\lambda, \mu$ are the Lam\'e constants from \eqref{eq:neohook}  and $c>0$ is the same viscosity parameter as in \eqref{eq:stiffness}.

We then compare the solution of the thin-plate model to a full three-dimensional simulation using the algorithm from the beginning of Section \ref{sec:numerics} developed in this article, applied to a 3d plate with thickness $h$ in a reference domain $\omega \times (-\frac{1}{2},\frac{1}{2})$ and $x_3$ derivative rescaled by $\frac{1}{h}$. The initial configuration is given as the respective element of the recovery sequence for $v$, $u$ above as detailed in \cite[Section 6.2]{Friesecke.2006}, yielding approximately the same starting energy as the thin plate limit. The simulation was performed using 201\,514 tetrahedra resulting in 818\,538 degrees of freedom. The wall time until full relaxation at $t\approx 30$ was approximately 6 days on 8 cores of an Intel Xeon Gold 6230.

The parameters in this experiment are $\mu = \lambda = 1.0\cdot 10^3$, $c=3.0\cdot 10^3$ and $h=0.1$. The time step size for the plate simulation was $\tau_\textrm{plate} =1$, for the 3d-simulation it was $\tau_\textrm{3d} = 0.01$. A comparison of the deformation of the mid-plane in the 3d simulation with the limiting plate model is shown in Figure \ref{fig:plate_sim1}. Figure \ref{fig:plate_sim2} shows the 3d deformation at a $t=2$, as well as a comparison of the energy and cumulative dissipation of the plate model and the 3d simulation (see also the convergence study below). One can clearly see that our model and the plate limit yield very similar results -- as one would expect. We also point out the near perfect adherence to energy-dissipation-equality by the 3d simulations using the method developed here.

\paragraph{Experiment 2: Jello.}
\begin{figure}
\begin{subfigure}{0.32\textwidth} \centering
  \includegraphics[width=\factor\textwidth,trim={9cm 1.2cm 0 0},clip=true]{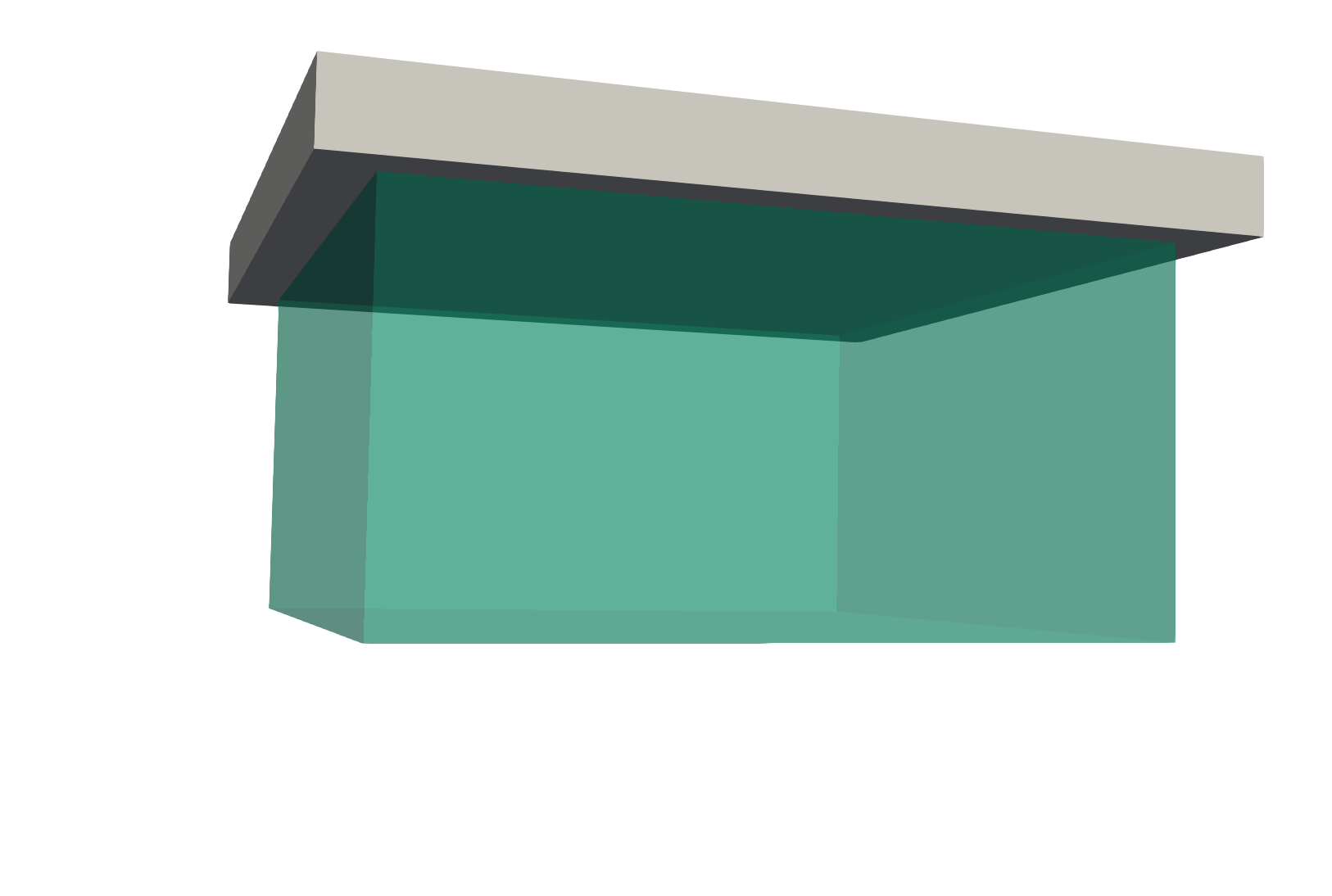}
  \caption{$t=0$.} \end{subfigure}
  \begin{subfigure}{0.32\textwidth} \centering
  \includegraphics[width=\factor\textwidth,trim={9cm 1.2cm 0 0},clip=true]{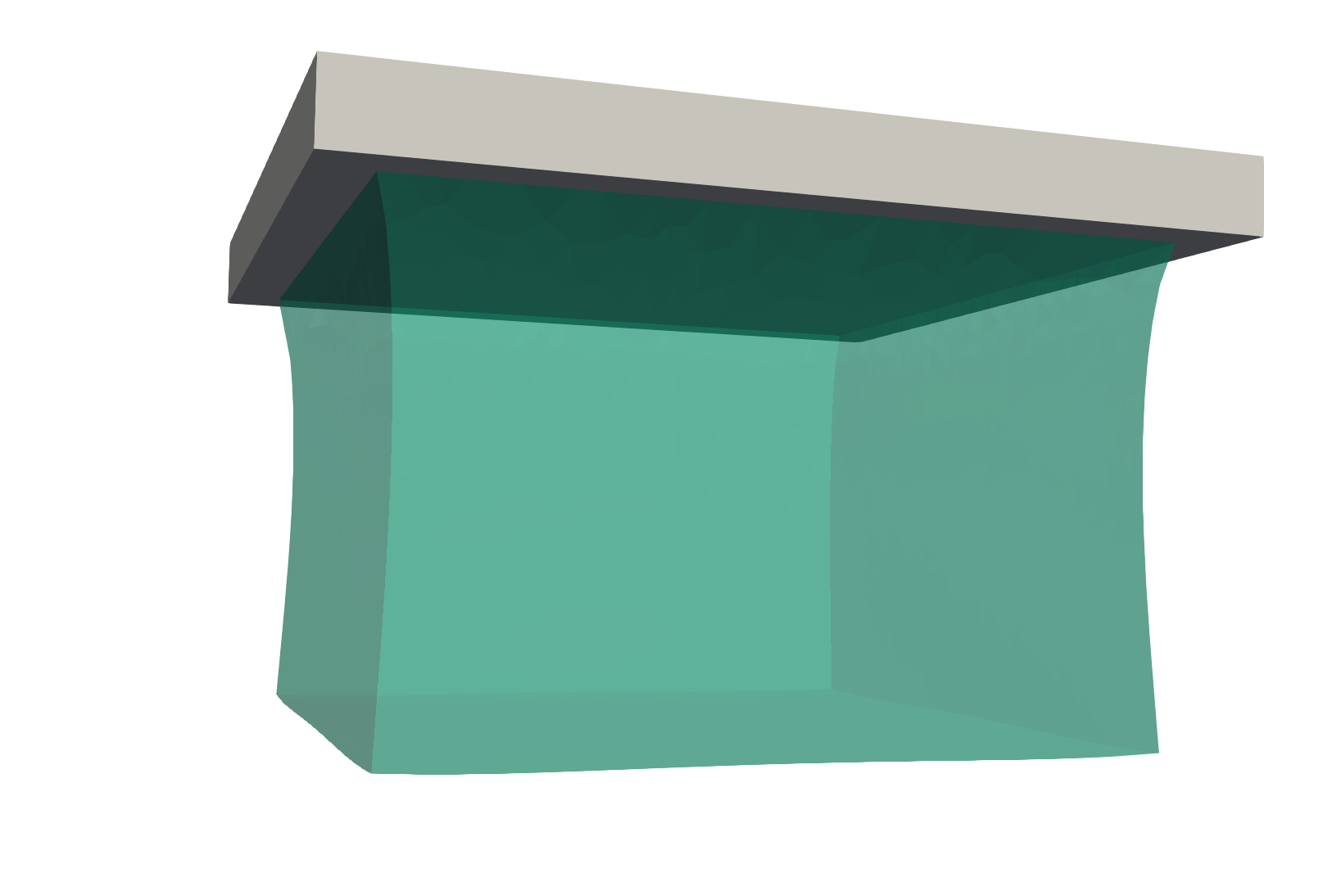}
  \caption{$t=10$.} \end{subfigure}
  \begin{subfigure}{0.32\textwidth} \centering
  \includegraphics[width=\factor\textwidth,trim={9cm 1.2cm 0 0},clip=true]{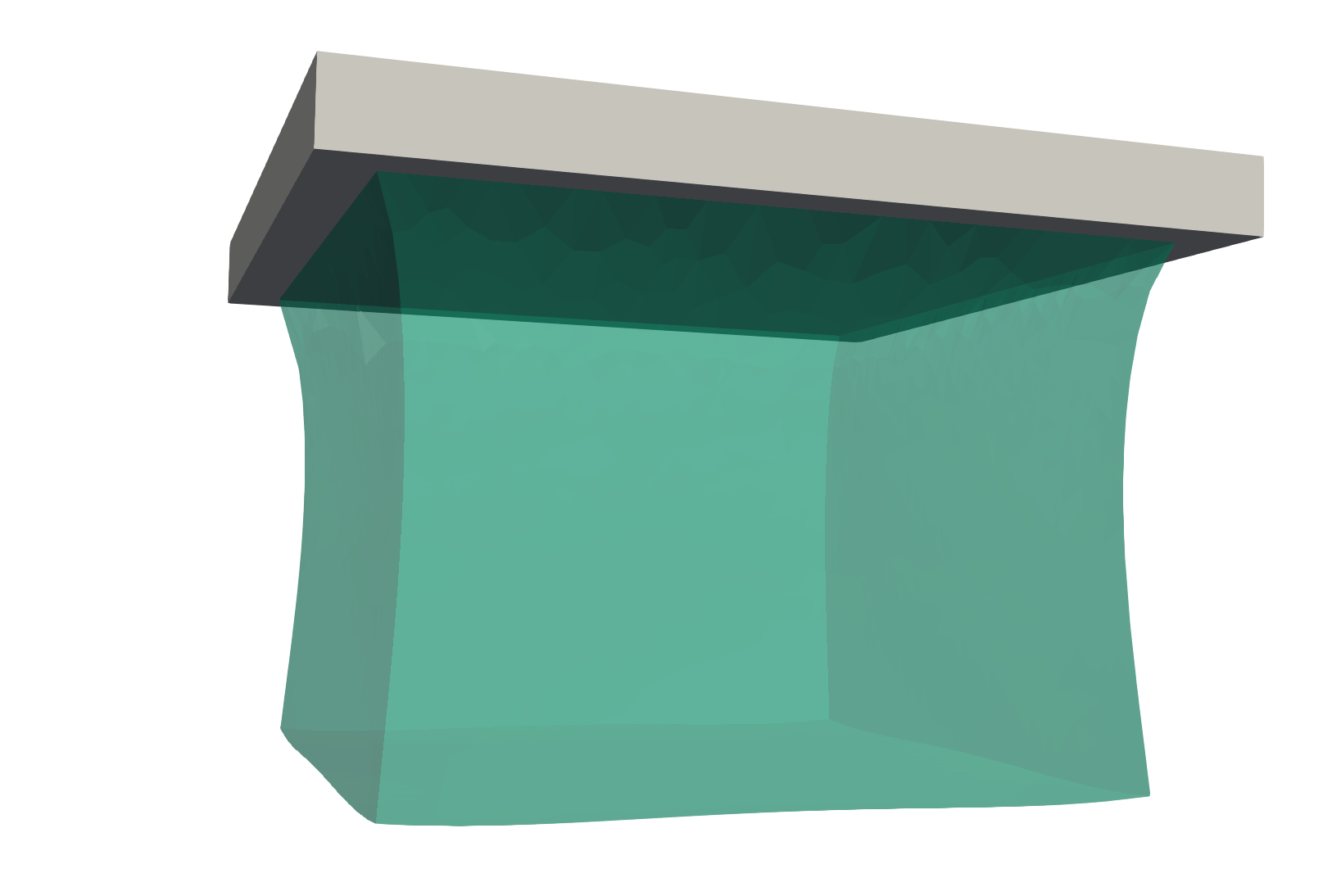}
  \caption{$t=30$.} \end{subfigure}
\caption{Simulation results for the viscoelastic (jello) material. The plate at the top indicates where the specimen is fixated, a gravitational body force acts downwards.} \label{fig:plate_sim1}
\end{figure}

We simulate the viscous evolution of a rectangular block of a material attached at the top under the influence of a gravitational body force. The jello has a reference configuration occupying a domain $\Omega = (-1,1) \times (-1,1) \times (-\frac{1}{2}, \frac{1}{2})$ and initial condition  $y(x)=x$. A Dirichlet boundary condition fixes $y|_{\{x_3 = \frac{1}{2}\}} = x$ for all time. The energy is a sum of the
\begin{wrapfigure}[14]{r}{0.4\linewidth}
\centering\includegraphics[width=0.9\linewidth]{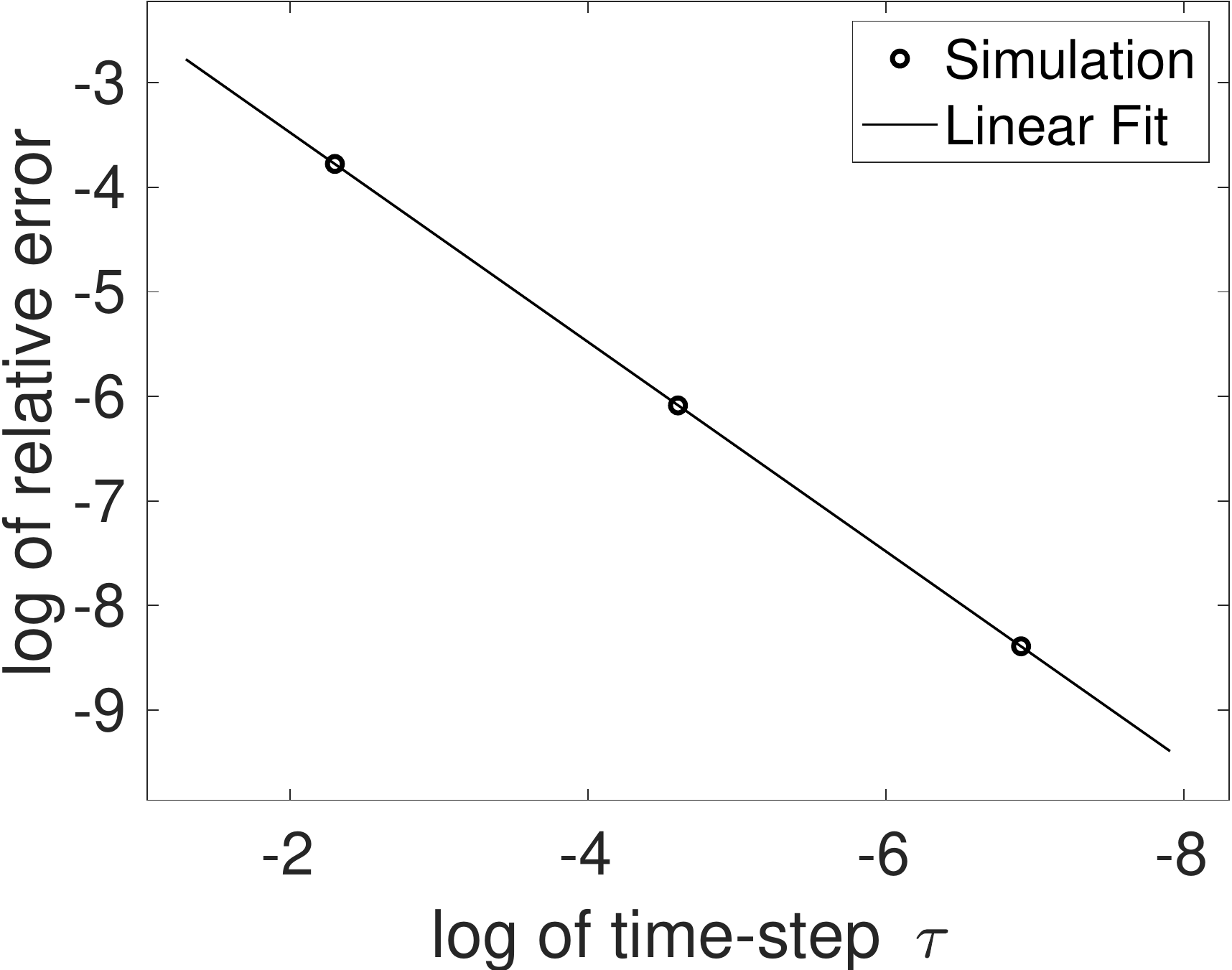}
\caption{Simulation using 147\,342 P2 finite elements. The slope of the linear fit is 1.00.} \label{fig:convergence}
\end{wrapfigure}
Neo-Hookean energy in \eqref{eq:neohook} and $-\int_\Omega f y_3(x)  \,\mathrm{d}x$. The parameters in this simulation are $\mu = 1.0\cdot 10^3$, $\lambda = 1.5\cdot 10^3$, $f=-2.0 \cdot 10^3$, $c=3.0\cdot 10^3$, and $\tau = 0.01$. Again, we see nearly perfect adherence to the energy-dissipation balance in this simulation. The number of degrees of freedom and the running time are the same as for experiment 1 above.

\paragraph{Convergence study.}

We perform a simple study testing the relative error in the energy-dissipation-equality depending on the time-step size $\tau$. On the reference domain $\Omega = (-1,1)^3$, we choose an initial condition
\[
y^0 = \begin{pmatrix}1.2 &0& 0 \\ 0& 1& 0 \\ 0& 0& 1 \end{pmatrix}x
\]
and perform the \emph{linearized} time-stepping scheme in \eqref{eq:discrete_timestep} until final time $T=1$, i.e., up to step $K_\tau=\frac{1}{\tau}$ for time-steps $\tau \in \{0.1, 0.01, 0.001\}$. Finally, we compute the relative error in the energy-dissipation equality with the  \emph{nonlinear} dissipation distance, i.e., we compute
\[
e_\tau = \frac{\left| W(y^0)-W(y^{K_\tau}) - \sum_{j=1}^{K_\tau} D(\nabla y^{j}, \nabla y^{j-1})\right|}{\sum_{j=1}^{K_\tau} D(\nabla y^{j}, \nabla y^{j-1})}.
\]
The simulation was performed using the parameters $\mu = 1.0\cdot 10^3$, $\lambda = 1.5\cdot 10^3$, $c=3.0\cdot 10^3$, no body force and stress-free boundary conditions on a domain discretized using 147\,342 P2 finite elements. From Figure \ref{fig:convergence} one can clearly obtain that $e_\tau = \mathcal{O}(\tau)$. The same scaling is also obtained using coarser discretizations with 71\,369 and 33\,216 P2 finite elements. We thus conclude that our linearized time-stepping scheme recovers the correct energy-dissipation equality for nonlinear Kelvin-Voigt viscoelasticity.

\section*{Acknowledgements}
PD and MJ are grateful for the hospitality afforded by The Institute of Information Theory and Automation of the Czech Academy of Sciences.
PD acknowledges support by the state of Baden-Württemberg through bwHPC and from the DFG via project 441523275 in SPP 2256. MK acknowledges the support and hospitality of the University of Freiburg during his stay there in January 2020.
MK and JV were supported by the GA\v{C}R project 21-06569K.

\bibliographystyle{abbrv}
\bibliography{DJKV_ONL}

\end{document}